\documentclass{amsart}
\parindent0mm
\parskip2pt plus2pt minus2pt

\usepackage{xcolor}

\usepackage{amssymb,latexsym,amsmath,amsthm,amsfonts,graphicx}
\numberwithin{equation}{section}
\newtheorem{thm}{Theorem}[section]

\newtheorem{prop}[thm]{Proposition}
\newtheorem{cor}[thm]{Corollary}
\newtheorem{lemma}[thm]{Lemma}

\def \remark{\noindent\emph{Remark.}\hspace{ 1.9 pt}}
\newcommand{\RR}{\mathbb{R}}
\newcommand{\CC}{\mathbb{C}}
\newcommand{\ol}{\overline}
\newcommand{\Pm}{P_m}
\newcommand{\Hm}{H_m}

\begin{document}

\title[Dmitry Khavinson, Erik Lundberg, Hermann Render]{Dirichlet's problem with entire data posed on an ellipsoidal cylinder}

\author[Dmitry Khavinson, Erik Lundberg, Hermann Render]{Dmitry Khavinson, Erik Lundberg, Hermann Render}

\maketitle

\section{Introduction}
A function $u$ is said to be harmonic if $\Delta u := \sum_{j=1}^{n}{\frac{\partial^2 u}{\partial x_j^2}} = 0$.
Given a domain $\Omega \subset \RR^n$ with sufficiently smooth boundary $\Gamma:=\partial \Omega$,
and given a function $f$ continuous on $\Gamma$,
the classical Dirichlet problem asks for a function $u$ that is harmonic in $\Omega$
and continuous in $\ol{\Omega}$ such that $u = f$ on $\Gamma$, i.e., $u$ satisfies
\begin{equation} 
\left\{
\vcenter{\openup\jot\ialign
         {\strut\hfil$\displaystyle#$&$\displaystyle{}#$\hfil&\quad#\hfil\cr
\Delta u &= 0 &in\/ $\Omega$,\cr
       u &= f &on $\Gamma$.\cr}}\right.
\label{eq:Dirichlet}
\end{equation}
For a bounded domain $\Omega$ with $\Gamma$ sufficiently smooth, 
it is well known since the early 20th century that a solution $u$ exists and is unique (see for instance \cite[Sec. 2.8]{GilbarTrudinger}).  

Following the pioneering work of H. S. Shapiro \cite{Shapiro89}, 
we suppose that the data function $f$ is the restriction to $\Gamma$ of a ``very nice'' function,
and we inquire whether the solution $u$ can be extended outside its natural domain $\Omega$.
Specifically, consider the following question:

\medskip

{\bf Question.} \emph{If the data function $f$ is entire, is the solution $u$ also entire?}

\medskip

H. S. Shapiro and the first author \cite{KhSh92} showed that the answer is ``yes''
when $\Omega$ is an ellipsoid.
D. Armitage \cite{Armi04} showed further that the order and type of the data are preserved.
The Khavinson-Shapiro conjecture \cite{KhSh92} states that ellipsoids are the only bounded domains for which entire data implies entire solution.
The third author proved this conjecture within a large class of algebraic domains
(namely, those for which the boundary has a defining polynomial with positive leading homogeneous part).
For a detailed discussion on related work, we refer the reader to the survey papers \cite{KhLu2010} and \cite{Render2011}.

In this paper we seek entire solutions for entire data in the case when $\Omega = D \times \RR \subset \RR^n$ is a cylinder with ellipsoidal base $D \subset \RR^{n-1}$.
In other words, starting from the question for ellipsoids resolved positively in \cite{KhSh92},
we let one of the semi-axes tend to infinity.
Under an additional assumption on the order of $f$,
we show that the answer to the above question remains affirmative.

\begin{thm}\label{ordleq1}
Let $\Omega=\{ x\in{\RR}^n : \sum_{j=1}^{n-1} \frac{x_j^2}{a_j^2} < 1\}$, where
$a_j>0$. If $f$~is an entire function on\/ $\CC^n$ with order $\rho(f) < 1$
then the Poisson integral solution of the Dirichlet problem (\ref{eq:Dirichlet}) 
extends to a harmonic function on\/~$\RR^n$ (and an entire function on $\CC^n$).
\end{thm}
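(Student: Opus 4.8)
The plan is to split the coordinates as $x=(x',x_n)$ with $x'=(x_1,\dots,x_{n-1})$, write $\Delta'=\sum_{j=1}^{n-1}\partial_{x_j}^2$ for the Laplacian in the cross-sectional variables, and regard $\Omega=D\times\RR$ with $D=\{x'\in\RR^{n-1}:\sum_{j<n}x_j^2/a_j^2<1\}$ an ellipsoid in $\RR^{n-1}$ and $\Gamma=\partial D\times\RR$. I will build directly an entire harmonic function $u$ on $\RR^n$ with $u=f$ on $\Gamma$ and of order $<1$, and only at the end identify it with the Poisson integral solution. The two ingredients are the solution operators attached to the \emph{base} ellipsoid $D$: let $H$ denote the Poisson (Dirichlet) solution operator on $D$, so $\Delta'H[\varphi]=0$ in $D$ and $H[\varphi]=\varphi$ on $\partial D$, and let $T$ denote the Green operator solving $\Delta'(T\psi)=\psi$ in $D$ with $T\psi=0$ on $\partial D$. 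By the ellipsoid case \cite{KhSh92} (with \cite{Armi04} for the preservation of order) $H$ carries entire data to entire solutions on all of $\RR^{n-1}$ with order preserved; writing $T=G-H\circ(\text{restriction of }G)$ for any entire particular-solution operator $G$ of $\Delta'$ shows that $T$ likewise maps entire functions to entire functions. Both operators act on the $x'$-variables alone, with $x_n$ carried along as a parameter, so both commute with $\partial_{x_n}$.

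First I would introduce the \emph{cross-sectional harmonic extension} of the data, $\Phi:=H[f]$, meaning that for each fixed $x_n$ the boundary values $x'\mapsto f(x',x_n)$ on $\partial D$ are extended $\Delta'$-harmonically in $x'$. Because $H$ is linear and depends analytically on the data, $\Phi$ is entire on $\CC^n$, of order $\rho(\Phi)\le\rho(f)<1$, and satisfies $\Delta'\Phi=0$ with $\Phi=f$ on $\Gamma$. I then define the candidate solution by the Neumann-type series
\begin{equation*}
u=\sum_{j=0}^{\infty}(-1)^j\,T^{\,j}\big[\partial_{x_n}^{2j}\Phi\big].
\end{equation*}
Granting convergence (discussed below), harmonicity is a telescoping cancellation: using $\Delta'\Phi=0$, the operator identity $\Delta'T^{\,j}=T^{\,j-1}$, and $\partial_{x_n}^2T^{\,j}=T^{\,j}\partial_{x_n}^2$, the two halves of $(\Delta'+\partial_{x_n}^2)u$ cancel term by term, so $\Delta u=0$. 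On $\Gamma$ every summand with $j\ge1$ vanishes since $T\psi=0$ on $\partial D$, whence $u=\Phi=f$ on $\Gamma$. Thus $u$ solves the Dirichlet problem provided the series defines an entire function.

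The heart of the matter --- and the step I expect to be the main obstacle --- is the convergence of this series, and this is exactly where the hypothesis $\rho(f)<1$ enters. For the derivative factors, fix $\rho'\in(\rho(f),1)$; a Cauchy estimate on a disc of radius $R$ in the $x_n$-variable combined with the order-$\rho'$ growth bound for $\Phi$ gives, on any compact set,
\begin{equation*}
\big\|\partial_{x_n}^{2j}\Phi\big\|\ \lesssim\ \inf_{R>0}\frac{(2j)!}{R^{2j}}\,e^{C R^{\rho'}}\ \lesssim\ \big((2j)!\big)^{1-1/\rho'}e^{O(j)},
\end{equation*}
and since $1-1/\rho'<0$ this decays faster than any geometric sequence. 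It remains to bound the operator factor: one must show that $T$ increases type by at most a bounded amount on the relevant class, so that $\|T^{\,j}\psi\|\lesssim C^{\,j}\|\psi\|$ on compacts (with a mild enlargement of the compact set). Establishing such uniform estimates for the Green operator of the ellipsoid, iterated $j$ times, is the genuinely technical point; granting it, the super-exponential decay of $\partial_{x_n}^{2j}\Phi$ dominates the $C^{\,j}$ growth, the series converges locally uniformly on $\CC^n$, and $u$ is entire of order $\le\rho(f)$. At $\rho(f)=1$ the exponent $1-1/\rho'$ degenerates to $0$ and this mechanism fails, which is consistent with the threshold in the statement.

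Finally I would identify $u$ with the Poisson integral solution. Both are harmonic in $\Omega$, continuous up to $\Gamma$, and equal to $f$ there, so their difference $w$ is harmonic in the cylinder and vanishes on $\Gamma$. Such a $w$ need not vanish in general --- separation of variables produces solutions $e^{\pm\lambda x_n}\phi(x')$ with $\phi$ a Dirichlet eigenfunction of $-\Delta'$ on $D$, growing like $e^{\lambda|x_n|}$ of order exactly $1$ --- but a Phragm\'en--Lindel\"of argument in the $x_n$-direction forces $w\equiv 0$ once both functions are known to have order $<1$. Since $u$ has order $\le\rho(f)<1$ by construction and the Poisson integral of data of order $<1$ inherits the same growth, uniqueness applies and $u$ is the desired entire extension.
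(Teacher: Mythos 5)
Your route is genuinely different from the paper's (which expands $f$ into homogeneous polynomials, solves each polynomial Dirichlet problem via a Fischer-operator argument, and then sums the resulting estimates using the exponential decay of the cylinder's Poisson kernel together with a lemma of Khavinson--Shapiro on homogeneous parts). Your Neumann-series ansatz $u=\sum_j(-1)^jT^{\,j}[\partial_{x_n}^{2j}\Phi]$ is formally sound: the telescoping computation of $\Delta u$, the vanishing of the $j\ge 1$ terms on $\Gamma$, and the Cauchy-estimate decay $\big((2j)!\big)^{1-1/\rho'}e^{O(j)}$ (which correctly locates where $\rho(f)<1$ enters) are all fine. But the step you flag as ``the genuinely technical point'' and then grant --- the bound $\|T^{\,j}\psi\|\lesssim C^{\,j}\|\psi\|$ in sup-norm on compacts --- is a genuine gap, and as formulated it cannot hold. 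The Green operator $T$ on $D$ is determined by $\psi|_{\overline D}$, but what you need to control is the \emph{analytic continuation} of $T\psi$ on compact sets strictly larger than $\overline D$, and for non-polynomial entire $\psi$ that continuation is simply not controlled by any sup-norm of $\psi$ on a compact set: two entire functions can agree closely on $\overline D$ while their harmonic continuations past $\partial D$ differ arbitrarily. Moreover, even the weaker statement with ``a mild enlargement of the compact set'' self-destructs under iteration: applying $T$ a total of $j$ times compounds $j$ enlargements, and if each enlargement is multiplicative the set on which you must estimate $\partial_{x_n}^{2j}\Phi$ grows geometrically in $j$, so the term $e^{CR^{\rho'}}$ in your Cauchy estimate becomes doubly exponential and wipes out the factorial gain. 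A correct version of this estimate has to be phrased not in sup-norms on compacts but in a graded class --- degree for polynomials, or order/type (Bargmann-space) norms as in Render's work --- and supplying it is essentially the same labor the paper performs with its homogeneous expansion, polynomial solutions, and the Poisson-kernel decay estimate. In other words, the content of the theorem is hiding inside the step you assumed.

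Two smaller gaps in the same spirit: (i) you assert that $\Phi=H[f]$ is entire on $\CC^n$ of order $\le\rho(f)$, but Khavinson--Shapiro and Armitage treat fixed data on an ellipsoid in $\RR^{n-1}$; here the data depends on the parameter $x_n$, and joint entirety plus a joint order bound in all $n$ variables requires a uniform quantitative argument, not just linearity of $H$. (ii) The final identification $u=U_f$ needs $u$ to have order $<1$ \emph{globally}, i.e.\ growth control of the summed series in the $x_n$-direction, which again does not follow from locally uniform convergence alone; the paper instead identifies the solution with the Poisson integral term by term via Yoshida's uniqueness theorem applied to each polynomial solution, where the growth condition is trivially checked.
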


The assumption that $\rho(f) < 1$ ensures that the Poisson integral converges.
However, the solution is not unique.
Uniqueness fails for every ellipsoidal base $D$, 
since there are solutions (of order $\rho = 1$) that vanish on $\Gamma$,
such as $\psi_\lambda(x') \exp \left( \sqrt{\lambda} x_n \right)$,
where $\psi_\lambda(x')$ is an eigenfunction (with eigenvalue $\lambda$) 
of the Laplacian in the variables $x' = (x_1,x_2,..,x_{n-1})$
with zero Dirichlet boundary values on $\partial D$.
Yet, as stated in Theorem \ref{thm:yoshida}, under an a priori growth assumption,
the solution is given uniquely by a Poisson integral.

Our proof partly follows the methods used in \cite{KhSh92} while utilizing 
an important additional ingredient---a decay estimate for the Poisson kernel of a cylinder
(see Equation (\ref{eq:PKest}) below).
In the concluding remarks, we state some open questions concerning solutions
that are not represented as Poisson integrals.

\noindent {\bf Acknowledgement.}
This paper resulted from discussions at the conference Dynamical Systems and Complex Analysis VII, May 2015, in Naharia, Israel.
The first two authors gratefully acknowledge NSF support for the conference (grant DMS -- 1464939).

\section{Preliminary results}

Let $S^{n-1}:=\left\{ x\in \mathbb{R}^{n}:\left| x\right|=1\right\}$ be the unit sphere.

Let $\Pm$ denote the space of polynomials in $n$ variables of degree at most $m$,
and $\Hm$ the subspace of homogeneous polynomials of degree $m$.

Let $B_{R}:=\left\{ x\in \mathbb{R}^{n}:\left| x\right| <R\right\} $ be the
open ball in $\mathbb{R}^{n}$ with center $0$ and radius $0<R\leq \infty .$
We say that a series expansion $f = \sum_{m=0}^{\infty}f_{m}\left( x\right)$ in homogeneous polynomials $f_m \in \Hm$
converges compactly on $B_R$ if it converges absolutely and uniformly on each compact subset $K\subset B_{R}$.
If $R$ is infinite then such $f$ extends to an entire function on $\CC^n.$
In the next section, we will use the following basic criterion, see \cite[Prop. 12]{Render}.

\begin{prop}\label{prop:convergence}
Let $f = \sum_{m=0}^{\infty }f_{m}$,
where $f_{m} \in \Hm$.
Then $f$ converges compactly in $B_{R}$ if and only if 
\begin{equation}\label{eqmaxnorm}
\limsup_{m \rightarrow \infty }\left( \max_{\theta \in S%
^{n-1}}\left| f_{m}\left( \theta \right) \right| \right) ^{1/m} \leq R^{-1}.
\end{equation}
Hence, $f$ is entire if
$$\lim_{m \rightarrow \infty } \left( \max_{\theta \in S^{n-1}}\left| f_{m}\left( \theta \right) \right| \right) ^{1/m} = 0.$$
\end{prop}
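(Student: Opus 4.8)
The plan is to recognize this as the Cauchy--Hadamard root test in disguise, the only genuinely new ingredient being the homogeneity of the $f_m$. Set $M_m := \max_{\theta \in S^{n-1}} |f_m(\theta)|$. Since $f_m$ is homogeneous of degree $m$, for $x \neq 0$ we have $|f_m(x)| = |x|^m \, |f_m(x/|x|)| \le |x|^m M_m$, with equality attained on the sphere of radius $|x|$, so that
\[
\max_{|x| \le r} |f_m(x)| = r^m M_m \qquad (r \ge 0).
\]
Thus questions about (normal) convergence of $\sum_m f_m$ on closed balls are converted into questions about the scalar series $\sum_m r^m M_m$, whose radius of convergence is governed by $\limsup_m M_m^{1/m}$. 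I expect no hard step to arise; the entire content sits in this scaling identity, and the remaining work is bookkeeping with the definition of compact convergence.

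For the sufficiency direction I would assume $\limsup_m M_m^{1/m} \le R^{-1}$ and take an arbitrary compact $K \subset B_R$. Then $K \subset \ol{B_r}$ for some $r < R$, and since $R^{-1} < r^{-1}$ I may choose $t$ with $\limsup_m M_m^{1/m} < t < r^{-1}$. For all large $m$ this gives $M_m < t^m$, hence $\max_{x \in K} |f_m(x)| \le r^m M_m < (rt)^m$ with $rt < 1$. The geometric tail is summable, so $\sum_m \max_{x\in K}|f_m(x)| < \infty$, which yields absolute and uniform convergence on $K$; as $K$ was arbitrary, $f$ converges compactly on $B_R$.

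For the necessity direction I would assume $f$ converges compactly on $B_R$ and fix $r < R$. The sphere $\{|x| = r\}$ is a compact subset of $B_R$, so uniform convergence of the series there forces its terms to tend to zero uniformly, i.e. $r^m M_m = \max_{|x| = r} |f_m(x)| \to 0$. In particular $r^m M_m \le 1$ for large $m$, giving $M_m^{1/m} \le r^{-1}$ eventually and therefore $\limsup_m M_m^{1/m} \le r^{-1}$. Letting $r \uparrow R$ yields $\limsup_m M_m^{1/m} \le R^{-1}$, as required. Note this argument uses only that the terms decay uniformly, so it is insensitive to whether ``absolute and uniform'' is read as normal convergence or merely as uniform convergence of partial sums.

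Finally, the ``hence'' clause is the case $R = \infty$: if $\lim_m M_m^{1/m} = 0$, then $\limsup_m M_m^{1/m} \le R^{-1}$ for every finite $R$, so the series converges compactly on all of $\RR^n$. For entireness on $\CC^n$, each $f_m$ extends to a homogeneous holomorphic polynomial, and a Bernstein--Walsh (Siciak) type comparison furnishes a constant $c_r$, depending on $r$ and $n$ but not on $m$, with $\max_{|z| \le r} |f_m(z)| \le c_r^{\,m} M_m$; since $c_r$ is independent of $m$, the hypothesis $M_m^{1/m} \to 0$ propagates to the complex balls, whence the series converges compactly on $\CC^n$ and its sum is entire. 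The only point needing care is this real-to-complex comparison, but it can equally be bypassed by invoking the principle, already recorded above, that a series of homogeneous polynomials converging compactly on $B_\infty$ extends to an entire function on $\CC^n$.
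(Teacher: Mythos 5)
Your proof is correct. Note, however, that the paper itself offers no argument to compare against: Proposition \ref{prop:convergence} is quoted from \cite[Prop. 12]{Render}, so your write-up is a self-contained substitute rather than a parallel to anything in the text. The substance of your argument is the standard Cauchy--Hadamard mechanism, and the key identity $\max_{|x|\le r}|f_m(x)| = r^m M_m$ (valid precisely because $f_m$ is homogeneous, so its modulus maximum over a ball sits on the bounding sphere) correctly reduces both directions to the scalar root test: sufficiency via the Weierstrass $M$-test on $\ol{B_r}$, $r<R$, and necessity via uniform decay of the terms on the sphere $\{|x|=r\}$ followed by $r\uparrow R$. The only genuinely delicate point is the one you flagged --- passing from compact convergence on $\RR^n$ to entirety on $\CC^n$ --- since the sup of a polynomial over the complex ball can exceed its sup over the real ball, and the equivalence constant on the finite-dimensional space $\Hm$ \emph{a priori} depends on $m$; what saves the day is exactly the exponential (in $m$) Bernstein--Walsh/Siciak bound $\max_{|z|\le 1,\, z\in\CC^n}|f_m(z)|\le c^m M_m$ with $c$ independent of $m$, which is absorbed by $M_m^{1/m}\to 0$. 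Your alternative of simply deferring to the convention recorded before the proposition (that compact convergence of a homogeneous expansion on $B_\infty$ yields an entire extension) is also legitimate within this paper, since that is precisely how the authors use the result in the proof of Theorem \ref{ordleq1}.
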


The \emph{order} of a non-constant entire function $f$ is defined as 
\begin{equation*}
\rho \left( f\right) :=\limsup_{r\rightarrow \infty } \frac{%
\log \log M \left( f ; r\right) }{\log r},
\end{equation*}
where 
\begin{equation*}\label{eq:M1}
M\left( f ; r\right) :=\sup \left\{ \left| f\left( z\right) \right| :z\in \mathbb{C}^{n},\left| z\right| =r\right\} . 
\end{equation*}


If $0<\rho \left( f\right) <\infty $, then the \emph{type} of $f$ is defined by 
\begin{equation*}
\tau \left( f\right) := \limsup_{r\rightarrow \infty } \frac{%
\log M\left( f ; r\right) }{r^{\rho \left( f\right) }}.
\end{equation*}

Let $f$ be an entire function of order $0<\rho <\infty $. 
Then we have the following extension (see \cite[Lemma 5]{Armi04}) to several variables of a classical formula on order and type of an entire function:
\begin{equation}\label{eq:ordtype}
\limsup_{m\rightarrow \infty } \left( m\cdot \max_{|z| =1}\left| f_{m}\left( z \right) \right| ^{\frac{\rho }{m}} \right) =e\rho \tau.
\end{equation}
Thus for given $\varepsilon >0$ we have for all sufficiently large $m \in \mathbb{N}$ 
\begin{equation*}
\max_{|z|=1}\left| f_{m}\left( z \right) \right|
\leq \frac{\left( e\rho \tau +\varepsilon \right) ^{m/\rho }}{m^{m/\rho }}.
\end{equation*}
In particular, if the order of $f(x)$ is less than one, 
there exists $\delta>0$ such that 
$$\max_{|z|=1}\left| f_{m}\left( z \right) \right| \leq \frac{1}{m^{m(1+\delta)}},$$ 
for all large enough $m$.  

In the next section we will need the following lemma from \cite[Lemma 2]{KhSh92}
that can be used to pass estimates from a harmonic polynomial to individual terms in its homogeneous expansion.

\begin{lemma}\label{homog}
Let $v$ be any harmonic polynomial, and $v = v_0 + v_1 + ... + v_m$ its decomposition into homogeneous polynomials.  Then 
$$\mathop {\max }\limits_{\theta \in S^{n-1}}\left| v_{k}\left( \theta \right)\right| \leq C_n k^{n/2}\mathop {\max }\limits_{\theta \in S^{n-1}}\left| v\left( \theta \right)\right| \quad (1 \leq k \leq m),$$
where $C_n$ depends only on $n$.
\end{lemma}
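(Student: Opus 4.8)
The plan is to reduce the statement to a standard sup-versus-$L^2$ comparison for spherical harmonics. First I would observe that each homogeneous component is itself harmonic: writing $\Delta v = \sum_{k} \Delta v_k = 0$ and noting that $\Delta v_k$ is homogeneous of degree $k-2$, the fact that these homogeneities are pairwise distinct forces $\Delta v_k = 0$ for every $k$. Hence each restriction $v_k|_{S^{n-1}}$ is a spherical harmonic of degree $k$, and $v|_{S^{n-1}} = \sum_k v_k|_{S^{n-1}}$ is precisely the decomposition of $v|_{S^{n-1}}$ into spherical harmonics of distinct degrees.

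Since spherical harmonics of distinct degrees are mutually orthogonal in $L^2(S^{n-1})$, this decomposition is orthogonal, so that
$$\|v_k\|_{L^2(S^{n-1})} \leq \|v\|_{L^2(S^{n-1})} \leq |S^{n-1}|^{1/2}\,\max_{\theta \in S^{n-1}}\left| v(\theta)\right|.$$
It then remains only to bound the sup-norm of the single spherical harmonic $v_k$ by its $L^2$-norm.

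For this I would invoke the reproducing kernel (zonal harmonic) $Z_k(\theta,\omega)$ of the space of degree-$k$ spherical harmonics, whose dimension I denote $N_k$. Expanding $Z_k$ in an $L^2$-orthonormal basis $\{Y_j\}$ of that space gives $Z_k(\theta,\theta) = \sum_j |Y_j(\theta)|^2$, which by rotational invariance is the constant $N_k/|S^{n-1}|$. The reproducing identity $v_k(\theta) = \int_{S^{n-1}} Z_k(\theta,\omega)\,v_k(\omega)\,d\sigma(\omega)$ together with the Cauchy--Schwarz inequality and $\|Z_k(\theta,\cdot)\|_{L^2}^2 = Z_k(\theta,\theta)$ then yields
$$\left| v_k(\theta)\right| \leq Z_k(\theta,\theta)^{1/2}\,\|v_k\|_{L^2} = \left( N_k/|S^{n-1}|\right)^{1/2}\|v_k\|_{L^2}.$$
Combining this with the orthogonality estimate gives $\max_{\theta} |v_k(\theta)| \leq N_k^{1/2}\,\max_{\theta}|v(\theta)|$.

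Finally, the elementary count $N_k = \binom{n+k-1}{n-1} - \binom{n+k-3}{n-1} = O(k^{n-2})$ shows $N_k^{1/2} \leq C_n\, k^{(n-2)/2} \leq C_n\, k^{n/2}$, which is the claimed estimate (indeed with a better power of $k$). The only nonroutine ingredient is the sup-to-$L^2$ comparison in the third step; this rests entirely on evaluating the zonal harmonic on the diagonal, which I expect to be the main point requiring careful justification.
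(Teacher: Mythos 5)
Your proof is correct: each step (harmonicity of the homogeneous components, $L^2$-orthogonality of spherical harmonics of distinct degrees, the sup-to-$L^2$ bound via the zonal harmonic evaluated on the diagonal, and the dimension count $N_k = O(k^{n-2})$) is sound, and it in fact yields the sharper exponent $k^{(n-2)/2}$ in place of $k^{n/2}$. The paper itself offers no proof --- it quotes the lemma from \cite{KhSh92} --- and your reproducing-kernel argument is essentially the standard one behind that reference, so it matches the intended approach.
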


Recall that the solution to the Dirichlet problem (\ref{eq:Dirichlet}) with polynomial data on an ellipsoid is a polynomial.
Using a modification of the celebrated proof of this fact based on linear algebra
(attributed to E. Fischer \cite{Fischer}, see \cite{KhLu2014} for an exposition), 
we show that this result holds also for an ellipsoidal cylinder $\Omega$.

\begin{thm}\label{thm:poly}
Let $\Omega=\{ x\in{\RR}^n : \sum_{j=1}^{n-1} \frac{x_j^2}{a_j^2} < 1\}$ be an ellipsoidal cylinder. 
For each polynomial, $f \in \Pm$, 
there exists a unique polynomial solution $u \in \Pm$ to the Dirichlet problem (\ref{eq:Dirichlet})
with data $f$.
\end{thm}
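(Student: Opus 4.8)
The plan is to adapt the linear-algebra proof of Fischer. Write $Q(x):=\sum_{j=1}^{n-1}x_j^2/a_j^2$, so that $\Gamma=\{Q=1\}$ and $Q-1$ is a defining polynomial of $\Gamma$. First I would record the divisibility fact that every $p\in\Pm$ vanishing on $\Gamma$ has the form $p=(Q-1)q$ with $q\in P_{m-2}$: viewing $Q-1$ as a polynomial in $x_1$ with leading coefficient the nonzero constant $a_1^{-2}$, division gives $p=(Q-1)q+r$ with $\deg_{x_1}r\le 1$, and evaluating $r=r_0(x_2,\dots,x_n)+x_1r_1(x_2,\dots,x_n)$ on the two sheets $x_1=\pm a_1\sqrt{1-\sum_{j\ge 2}x_j^2/a_j^2}$ of $\Gamma$ forces $r\equiv 0$; the degree count then gives $\deg q\le m-2$. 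Granting this, I look for the solution as $u=f-(Q-1)q$ with $q\in P_{m-2}$, so that automatically $u\in\Pm$ and $u|_\Gamma=f|_\Gamma$, and the requirement $\Delta u=0$ becomes $Lq=\Delta f$, where $L\colon P_{m-2}\to P_{m-2}$ is the linear map $Lq:=\Delta\bigl((Q-1)q\bigr)$ (a degree check shows $L$ indeed lands in $P_{m-2}$). Since $L$ is an endomorphism of a finite-dimensional space, both existence and uniqueness of $u$ in $\Pm$ follow at once once $L$ is shown to be injective.

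Thus everything reduces to the uniqueness statement: a harmonic polynomial $w$ vanishing on $\Gamma$ is identically zero. This is the step where the cylindrical geometry forces a modification of the classical argument. Because $Q$ does not involve $x_n$, I expand $w=\sum_{l=0}^{t}w_l(x')\,x_n^{\,l}$ in powers of $x_n$, with $x'=(x_1,\dots,x_{n-1})$ and $w_t\not\equiv 0$. Splitting $\Delta=\Delta'+\partial_{x_n}^2$, where $\Delta'=\sum_{j=1}^{n-1}\partial_{x_j}^2$, and reading off the coefficient of the top power $x_n^{\,t}$ in $\Delta w=0$ yields $\Delta'w_t=0$, since $\partial_{x_n}^2$ strictly lowers the $x_n$-degree. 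Hence $w_t$ is a harmonic polynomial in the $n-1$ variables $x'$. Moreover, as $w$ vanishes on the cylinder $\Gamma$ for every value of $x_n$, each coefficient $w_l$ vanishes on the base ellipsoid $\{Q(x')=1\}\subset\RR^{n-1}$, and in particular so does $w_t$. In this way the assertion for the cylinder in $\RR^n$ is reduced to the identical assertion for the genuine (nondegenerate) ellipsoid in $\RR^{n-1}$.

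For that nondegenerate ellipsoid the Fischer inner-product method applies, with weights inserted to accommodate unequal semi-axes. On $\RR[x']$ I define the positive-definite pairing $\langle x^\alpha,x^\beta\rangle=\delta_{\alpha\beta}\prod_{j=1}^{n-1}a_j^{\alpha_j}\alpha_j!$; a check on monomials shows that multiplication by $x_j^2$ is adjoint to $a_j^2\partial_{x_j}^2$, so that multiplication by $Q=\sum_j a_j^{-2}x_j^2$ is adjoint to the full Laplacian $\Delta'$. Writing $w_t=(Q-1)\tilde q$ and passing to the top homogeneous part $g$ of $\tilde q$, the top-degree part of $\Delta'\bigl((Q-1)\tilde q\bigr)=0$ is $\Delta'(Qg)=0$, so $Qg$ is $\Delta'$-harmonic; then $\langle Qg,Qg\rangle=\langle g,\Delta'(Qg)\rangle=0$, forcing $Qg=0$ and hence $g=0$. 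Iterating over degrees gives $\tilde q=0$, so $w_t\equiv 0$, contradicting $w_t\not\equiv 0$ unless $w\equiv 0$. This establishes injectivity of $L$ and completes the proof.

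The essential new difficulty—and the reason the classical proof must be modified—is the degeneracy of $Q$ as a quadratic form on $\RR^n$: the $x_n$-direction lies in its kernel, so multiplication by $Q$ can never reproduce $\partial_{x_n}^2$, and there is no positive-definite pairing making multiplication by $Q$ adjoint to the full $n$-dimensional Laplacian $\Delta$. Extracting the leading coefficient in $x_n$ is precisely the device that eliminates the free variable and deposits the problem on the nondegenerate base ellipsoid, where Fischer's pairing is available. I expect the only genuinely delicate point to be the bookkeeping in that reduction—verifying that the top $x_n$-coefficient is simultaneously $\Delta'$-harmonic and vanishing on the base ellipsoid—while the divisibility and dimension arguments are routine.
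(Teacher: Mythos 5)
Your proof is correct, and it shares the paper's outer framework: write the prospective solution as $u = f - (Q-1)q$ and reduce everything to injectivity of the finite-dimensional Fischer-type operator $q \mapsto \Delta\bigl((Q-1)q\bigr)$. Where you genuinely diverge is in the proof of injectivity. The paper passes to leading homogeneous parts in the \emph{total} degree --- from $\Delta\bigl((Q-1)q\bigr)=0$ it deduces $\Delta(Q\,q_M)=0$, where $q_M$ is the top homogeneous term of $q$ --- and then cites the Brelot--Choquet theorem (a nonzero harmonic polynomial cannot have a non-constant factor of constant sign), which applies to the non-negative, albeit degenerate, quadratic $Q$. You instead extract the top coefficient $w_t$ in $x_n$ of a harmonic polynomial $w$ vanishing on $\Gamma$, observe that $w_t$ is $\Delta'$-harmonic and vanishes on the nondegenerate base ellipsoid in $\RR^{n-1}$, and finish with the weighted Fischer inner product there; your adjointness computation (multiplication by $Q$ is adjoint to $\Delta'$ under the weights $\prod_j a_j^{\alpha_j}\alpha_j!$) is correct, as is the leading-term argument that follows. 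The trade-off: the paper's route is two lines but leans on an external theorem, and it generalizes verbatim to any boundary whose defining polynomial has non-negative leading homogeneous part (the setting of the Khavinson--Shapiro conjecture mentioned in the introduction); yours is entirely self-contained, and it additionally spells out the divisibility lemma (any polynomial vanishing on $\Gamma$ is divisible by $Q-1$) that the uniqueness assertion requires and that the paper's proof leaves implicit in the phrase ``it suffices to show that the Fischer operator is bijective.'' Two minor slips, neither fatal: in the divisibility argument the sum under the square root should run over $2 \le j \le n-1$ only (the variable $x_n$ remains free on each sheet of $\Gamma$), and no iteration over degrees is actually needed at the end of the Fischer-pairing step --- once the top homogeneous part $g$ of $\tilde q$ is forced to vanish, you already have a contradiction with $\tilde q \neq 0$.
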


\begin{proof}
Let $p(x) = \sum_{j=1}^{n-1} \frac{x_j^2}{a_j^2} - 1$ be the defining polynomial
of $\Gamma = \partial \Omega$.
It suffices to show that the Fischer operator, $q\longmapsto F(q):=\Delta (p q)$, is bijective on the space $\Pm$ of polynomials of degree $m$.
Indeed, given data $f$, the solution $u$ can 
then be expressed using inversion of the operator $F$ as
$u = f - p F^{-1}(\Delta f) $.
Since $\Pm$ is finite dimensional, and $F$ is linear, it follows that injectivity and surjectivity are equivalent. 
Suppose the kernel of $F$ is not trivial.  Thus, $\Delta (P q)=0$ for some nonzero polynomial $q$.  
Let $p_2 = \sum_{i=1}^{n-1}{\frac{x_i^2}{a_i^2}}$ and $q_M$ be the leading homogeneous terms of $p$ and $q$ respectively.  
Then $\Delta (p q)=0$ implies $\Delta (p_2 q_M)=0$.  
But this is a contradiction since the Brelot-Choquet theorem \cite{B-C} states that a harmonic polynomial cannot have non-negative factors.
\end{proof}

Let $\lambda$ denote the smallest positive eigenvalue of the Laplacian in the variables $x' = (x_1,x_2,..,x_{n-1})$
and $\psi_\lambda(x')$ the corresponding eigenfunction of the ellipsoid $D$ (with zero Dirichlet boundary values on $\partial D$).
Recall that the cylinder $\Omega$ has a Poisson kernel $K(x,y)$ (which is obtained by
constructing the harmonic Green's function of $\Omega$ and then taking its normal derivative along the boundary).
The following result is a special case of \cite[Thm. 6]{Yoshida} (cf. \cite[Thm. A]{Miya96}).

\begin{thm}[Yoshida \cite{Yoshida}]
\label{thm:yoshida}
Let $f$ be a continuous function on $\Gamma:=\partial \Omega$ satisfying
\begin{equation}\label{eq:growthcond}
\int_{-\infty}^{\infty} \exp \left( -\sqrt{\lambda} |x_n| \right) \left( \int_{\partial D} |f(x',x_n)| d \sigma(x') \right) dx_n < \infty,
\end{equation}
where $d \sigma$ denotes the $(n-2)$-dimensional surface measure on $\partial D$.
Then, letting $dS$ denote the $(n-1)$-dimensional surface measure on $\Gamma$, the Poisson integral
\begin{equation}\label{eq:PK}
U_f(x) = \int_{\Gamma} f(y) K(x,y) dS(y),
\end{equation}
converges at each point $x \in \Omega$ and defines a classical solution to the Dirichlet problem (\ref{eq:Dirichlet}).
Moreover, suppose $u$ is a solution of (\ref{eq:Dirichlet}) that satisfies
\begin{equation}\label{eq:growthcond2}
\lim_{x_n \rightarrow \pm \infty} \exp \left( -\sqrt{\lambda} |x_n| \right) \int_{D}\psi_\lambda(x') u(x',x_n) dx' = 0.
\end{equation}
Then, $u=U_f$ coincides with the Poisson integral.
\end{thm}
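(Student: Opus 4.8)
The plan is to establish the two assertions separately, with everything resting on the Dirichlet eigenfunction expansion on the base $D$ together with the Poisson-kernel decay estimate (\ref{eq:PKest}). Write $\Delta=\Delta'+\partial_{x_n}^2$, where $\Delta'$ is the Laplacian in $x'=(x_1,\dots,x_{n-1})$, and let $\{\psi_{\lambda_k}\}_{k\ge 1}$ be an $L^2(D)$-orthonormal basis of Dirichlet eigenfunctions of $-\Delta'$ with $0<\lambda=\lambda_1\le\lambda_2\le\cdots$. Expanding the harmonic Green's function of $\Omega$ in this basis, each mode contributes a one-dimensional Green's function for $-\partial_{x_n}^2+\lambda_k$ on $\RR$, whose kernel decays like $e^{-\sqrt{\lambda_k}\,|x_n-y_n|}$. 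Since $\sqrt{\lambda_k}\ge\sqrt{\lambda}$, the slowest decay is set by the ground state, and taking the normal derivative along $\Gamma$ produces the Poisson kernel together with a bound of the form (\ref{eq:PKest}), namely $K(x,y)\le C(x)\,e^{-\sqrt{\lambda}\,|y_n|}$ for $|y_n|$ large, with $C$ locally bounded on $\Omega$.

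For the existence assertion I would combine this decay with the hypothesis (\ref{eq:growthcond}). The integrand in (\ref{eq:PK}) is dominated, up to the locally bounded factor $C(x)$, by $e^{-\sqrt{\lambda}\,|y_n|}\,|f(y)|$, so (\ref{eq:growthcond}) gives absolute and locally uniform convergence of $U_f$; the $x$-derivatives of $K$ inherit the same exponential decay, which legitimizes differentiation under the integral sign and yields $\Delta U_f=0$ in $\Omega$. Attainment of the boundary data, $U_f\to f$ at each point of $\Gamma$, then follows from the approximate-identity property of the Poisson kernel (its mass concentrates at the base point as $x$ approaches $\Gamma$), the continuity of $f$, and the uniform control of the $y_n$-tail furnished by (\ref{eq:growthcond}).

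For uniqueness I would set $v=u-U_f$, which is harmonic in $\Omega$, vanishes on $\Gamma$, and (since $U_f$ inherits the ground-state growth control from (\ref{eq:PKest})) again satisfies (\ref{eq:growthcond2}); the goal is $v\equiv 0$. For each $k$ put $w_k(x_n)=\int_D \psi_{\lambda_k}(x')\,v(x',x_n)\,dx'$. Using $\Delta v=0$, the vanishing of $v$ and $\psi_{\lambda_k}$ on $\partial D$, and Green's identity in the $x'$ variables, one obtains $w_k''=\lambda_k w_k$, hence $w_k(x_n)=A_k e^{\sqrt{\lambda_k}\,x_n}+B_k e^{-\sqrt{\lambda_k}\,x_n}$. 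The hypothesis (\ref{eq:growthcond2}) is exactly the statement that $e^{-\sqrt{\lambda}|x_n|}w_1(x_n)\to 0$ as $x_n\to\pm\infty$; letting $x_n\to+\infty$ forces $A_1=0$ and letting $x_n\to-\infty$ forces $B_1=0$, so the ground-state mode $w_1$ vanishes identically.

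The main obstacle is precisely the passage from this single-mode information to $v\equiv 0$. Condition (\ref{eq:growthcond2}) sees only $w_1$, whereas a higher mode $\psi_{\lambda_k}(x')e^{\pm\sqrt{\lambda_k}\,x_n}$ is harmonic, vanishes on $\Gamma$, and is $L^2(D)$-orthogonal to $\psi_\lambda$, hence invisible to (\ref{eq:growthcond2}) on its own. What makes the ground-state rate $\sqrt{\lambda}=\sqrt{\lambda_1}$ decisive is that it is the \emph{smallest}: if one can promote the hypothesis to an a priori global bound $v(x',x_n)=o\!\left(e^{\sqrt{\lambda}|x_n|}\right)$, then $\|v(\cdot,x_n)\|_{L^2(D)}=o(e^{\sqrt{\lambda}|x_n|})$ gives $w_k(x_n)=o(e^{\sqrt{\lambda}|x_n|})$, and since $\sqrt{\lambda_k}\ge\sqrt{\lambda}$ this forces $A_k=B_k=0$ for every $k$, whence $v\equiv 0$. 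Deriving such a global bound from the stated growth data—by a Phragm\'en--Lindel\"of argument on the truncated cylinders $D\times(-T,T)$, comparing $v$ with a ground-state barrier such as $\psi_\lambda(x')\cosh(\sqrt{\lambda}\,x_n)$ and invoking the maximum principle with zero lateral values—is the technical heart of the theorem and where I expect the bulk of the effort to go.
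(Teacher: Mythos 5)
A preliminary remark on the comparison itself: the paper offers no proof of this theorem --- it is imported by citation as a special case of Yoshida's Theorem 6 (cf.\ Miyamoto's Theorem A) --- so the only question is whether your sketch could stand as a proof on its own. It cannot, and the obstruction is exactly the one you flag in your final paragraph; but it is worse than a hard technical step, because \emph{the uniqueness assertion with the signed integral in (\ref{eq:growthcond2}) is false as stated}. Take $f \equiv 0$, so $U_f \equiv 0$, and let $v(x) = \psi_{\lambda_2}(x')\,e^{\sqrt{\lambda_2}\,x_n}$, where $\lambda_2 > \lambda$ is the second Dirichlet eigenvalue of $D$ (the ground eigenvalue of a connected domain being simple). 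Then $v$ is harmonic in $\Omega$, continuous on $\ol{\Omega}$, vanishes on $\Gamma$, and satisfies (\ref{eq:growthcond2}) identically, since $\int_D \psi_\lambda(x')\,v(x',x_n)\,dx' = 0$ for every $x_n$ by orthogonality of eigenfunctions; yet $v \not\equiv 0$. So your observation that higher modes are ``invisible'' to (\ref{eq:growthcond2}) does not merely block your mode-by-mode argument --- it refutes the statement, and consequently the Phragm\'en--Lindel\"of promotion you hope for (from (\ref{eq:growthcond2}) to a global bound $o(e^{\sqrt{\lambda}|x_n|})$) cannot exist. The hypothesis Yoshida and Miyamoto actually impose controls the \emph{size} of $u$, not its signed ground-state coefficient: in the present notation,
\begin{equation*}
\lim_{x_n \rightarrow \pm \infty} \exp\left( -\sqrt{\lambda}\,|x_n| \right) \int_{D} \psi_\lambda(x')\,\left|u(x',x_n)\right| dx' = 0,
\end{equation*}
equivalently, conditions on the positive and negative parts of $u$. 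The absolute value is what the transcription has lost.

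Under that corrected hypothesis your skeleton does close, with one additional and genuinely needed ingredient: domination of \emph{all} modes by the ground state. By Hopf's boundary point lemma, $\psi_\lambda(x') \geq c\,\mathrm{dist}(x',\partial D)$ on the ellipsoid $D$, while each $\psi_{\lambda_k}$ is smooth on $\ol{D}$ and vanishes on $\partial D$, so $|\psi_{\lambda_k}| \leq C_k\,\psi_\lambda$. Hence, with $v = u - U_f$ and $w_k(x_n) = \int_D \psi_{\lambda_k}(x')\,v(x',x_n)\,dx'$ in your notation, the corrected condition gives $|w_k(x_n)| \leq C_k \int_D \psi_\lambda(x')\,|v(x',x_n)|\,dx' = o\left(e^{\sqrt{\lambda}|x_n|}\right)$ for every $k$, not just $k=1$; your ODE argument ($w_k'' = \lambda_k w_k$ with $\sqrt{\lambda_k} \geq \sqrt{\lambda}$) then annihilates every coefficient $A_k$, $B_k$, and completeness of the eigenbasis in $L^2(D)$ yields $v \equiv 0$. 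Two details should be recorded: first, $U_f$ itself satisfies the corrected condition --- this follows from Yoshida's refined kernel bounds, which decay like $e^{-\sqrt{\lambda}|x_n - y_n|}$, combined with (\ref{eq:growthcond}) and dominated convergence --- so that $v$ inherits it from $u$; second, the correction is harmless for the paper's purposes, since the polynomial solutions to which the theorem is applied in Corollary \ref{cor:poly} clearly satisfy the absolute-value condition. Your existence sketch, by contrast, is the standard route and is fine in outline.
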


\begin{cor}\label{cor:poly}
For an entire data function $f$ with order $\rho < 1$, 
the Dirichlet problem (\ref{eq:Dirichlet}) can be solved by a Poisson integral.
Moreover, if $f \in \Pm$ is a polynomial, then the Poisson integral solution coincides with the polynomial solution $u \in \Pm$ provided by Theorem \ref{thm:poly}.
\end{cor}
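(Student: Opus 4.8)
The plan is to obtain both statements as direct applications of Yoshida's theorem (Theorem~\ref{thm:yoshida}); in each case the work reduces to verifying one of its two hypotheses.

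For the first assertion I would check that an entire $f$ of order $\rho<1$ satisfies the convergence condition (\ref{eq:growthcond}). Fix $\rho'$ with $\rho<\rho'<1$. By the definition of order there is $R_0$ with $M(f;r)\le\exp(r^{\rho'})$ for all $r\ge R_0$. Since the base $D$ lies in a ball of radius $a:=\max_j a_j$, every point of $\Gamma$ has the form $(x',x_n)$ with $|(x',x_n)|\le a+|x_n|$, so that $|f(x',x_n)|\le\exp\!\left((a+|x_n|)^{\rho'}\right)$ for $|x_n|$ large. Bounding the inner integral in (\ref{eq:growthcond}) by $|\partial D|\exp\!\left((a+|x_n|)^{\rho'}\right)$ and noting that, because $\rho'<1$, this is $o\!\left(\exp(\sqrt{\lambda}\,|x_n|)\right)$, the full integrand decays exponentially in $x_n$ and the integral converges. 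Theorem~\ref{thm:yoshida} then yields that $U_f$ converges on $\Omega$ and solves the Dirichlet problem.

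For the second assertion, a polynomial $f\in\Pm$ has order $0<1$, so by the first part $U_f$ is a classical solution. To identify $U_f$ with the polynomial solution $u\in\Pm$ of Theorem~\ref{thm:poly}, I would verify the uniqueness hypothesis (\ref{eq:growthcond2}) for $u$. Writing $u(x',x_n)=\sum_k c_k(x')\,x_n^k$ with polynomial coefficients $c_k$, the inner integral $\int_D\psi_\lambda(x')u(x',x_n)\,dx'=\sum_k\big(\int_D\psi_\lambda c_k\,dx'\big)x_n^k$ is a polynomial in $x_n$, hence of polynomial growth; multiplying by $\exp(-\sqrt{\lambda}\,|x_n|)$ forces the limit in (\ref{eq:growthcond2}) to vanish as $x_n\to\pm\infty$. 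The uniqueness part of Theorem~\ref{thm:yoshida} then gives $u=U_f$.

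The argument is largely a matter of checking hypotheses, and I expect no serious obstacle. The one point deserving care is the growth estimate on $\Gamma$: one must pass from the bound $M(f;r)\le\exp(r^{\rho'})$ on complex spheres to a bound at real boundary points and confirm that the sub-exponential growth $\exp(|x_n|^{\rho'})$ with $\rho'<1$ is genuinely dominated by the Poisson weight $\exp(\sqrt{\lambda}\,|x_n|)$---this comparison of growth rates is exactly where the hypothesis $\rho<1$ is used.
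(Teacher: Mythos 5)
Your proposal is correct and follows exactly the paper's own route: both parts are obtained from Theorem \ref{thm:yoshida} by verifying hypothesis (\ref{eq:growthcond}) for entire data of order $\rho<1$ and hypothesis (\ref{eq:growthcond2}) for the polynomial solution of Theorem \ref{thm:poly}. The paper states these verifications in two sentences without detail; your write-up merely fills in those details (the growth comparison $\exp\left((a+|x_n|)^{\rho'}\right)=o\left(\exp(\sqrt{\lambda}\,|x_n|)\right)$ and the polynomial-growth argument), and does so correctly.
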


\begin{proof}[Proof of Corollary \ref{cor:poly}]
The assumption on $f$ implies that the growth condition (\ref{eq:growthcond}) is satisfied so that there is a Poisson integral solution
to the Dirichlet problem with data $f$.
If $f \in \Pm$ is a polynomial, 
then the polynomial solution $u \in \Pm$ satisfies (\ref{eq:growthcond2}),
and we conclude that $u$ is the Poisson integral solution.
\end{proof}

The growth condition (\ref{eq:growthcond}) is related to 
the exponential decay rate of the Poisson kernel,
an estimate that we state here as it is a key ingredient in the next section.
Let us assume that $S^{n-1} \subset \Omega$.
It follows from \cite[Sec. 7]{Yoshida} that there exists a constant $C > 0$ such that, for all $\theta \in S^{n-1}$ and $y \in \Gamma$, we have:
\begin{equation}\label{eq:PKest}
 K(\theta,y) \leq C\exp \left( -\sqrt{\lambda} |y_n| \right).
\end{equation}
Note that the corresponding inequalities in \cite[Sec. 7, p. 394]{Yoshida} contain additional factors, but since they are continuous functions of $\theta$,
we arrive at the constant $C$ by taking the largest value as $\theta$ varies over the compact set $S^{n-1}$.

\remark
The Radon-Nikodym derivative of the Poisson kernel, known as \emph{harmonic measure},
has a well-known probabilistic interpretation;
namely, the harmonic measure of a subset of the boundary 
determines the probability of Brownian motion first exiting the boundary through that set.
This leads to an intuitive explanation for the fact that the Poisson kernel in a cylinder decays exponentially.
Namely, for $N \in \mathbb{N}$, let $\Omega_N := \{x \in \Omega: |x_n| < N \}$ denote the cylinder $\Omega$ truncated at $x_n = \pm N$.  
The probability that Brownian motion starting from $\theta \in S^{n-1}$ exits $\Omega_N$ 
through one of its caps $|x_n| = N$ 
can be expressed by conditioning on the event of exiting $\Omega_{N-1}$ through one of its caps $|x_n| = N-1$.
This process can be iterated leading to a product of $N-1$ conditional probabilities,
and each of those factors can be estimated uniformly by a constant less than one,
which implies exponential decay.
As this reasoning might suggest, decay estimates for harmonic measure hold in unbounded domains more general than cylinders;
see the recent Ph.D. thesis of K. Ramachandran \cite[Prop. 2.0.8]{Ram2014}.


\section{Proof of Theorem \ref{ordleq1}}
As above, we may assume without loss of generality that $S^{n-1} \subset \Omega$,
since a change of variables does not effect the magnitude of the order of $f$.
Let us write $f(x) = \sum_{k=0}^{\infty}{f_m(x)}$ as a series of homogeneous polynomials $f_m$ of degree $m$.  
Let $M_m = \max_{|z|=r}\left| f_{m}\left( z \right) \right|$.  
Since the order of $f(x)$ is less than one, it follows from the comments after equation (\ref{eq:ordtype})
that there exists $\delta>0$ such that $M_m \leq \frac{1}{m^{m(1+\delta)}}$ for all large enough $m$.
Let $u_m \in \Pm$ be the solution, guaranteed by Theorem \ref{thm:poly}, to the Dirichlet problem with data $f_m$, and $u_m = u_{m,0}+u_{m,1}+...+u_{m,m}$ its decomposition into homogeneous polynomials. 
Then by Corollary \ref{cor:poly} $u_m$ has the Poisson integral representation 
$u_m(x) = \int_{\Gamma}{f_m(y)\cdot K(x,y)d\sigma_y}$.  
We will show that the solution $u(x) = \sum_{m=0}^{\infty}{\sum_{k=0}^{m}{u_{m,k}(x)}}$ is entire.  

The degree $j$ homogeneous term of $u$ is given by $\sum_{m=j}^{\infty}{u_{m,j}}$.
Assume that each of these sums converges for $\theta \in S^{n-1}$
(this is justified throughout the estimates that follow).  
Then, according to Proposition \ref{prop:convergence}, $u$ is entire if 
\begin{equation}\label{eq:obj}
\limsup_{j\rightarrow \infty }\left( \max_{\theta \in S^{n-1}} \left| \sum_{m=j}^{\infty}{u_{m,j}(\theta)} \right| \right) ^{1/j} = 0
\end{equation}
Let us first estimate $|u_m(\theta)|$ for $\theta \in S^{n-1}$:

\begin{align*}
|u_m(\theta)| =  \left| \int_{\Gamma}{f_m(y)\cdot K(\theta,y)dy} \right| &\leq  \int_{\Gamma}{\left|f_m(y)\right|\cdot K(\theta,y)dy} \\
\text{(since } f_m \text{ is homogeneous)} \quad &= \int_{\Gamma}{||y||^m \left| f_m \left(\frac{y}{||y||}\right) \right|\cdot K(\theta,y)dy} \\
&\leq \int_{\Gamma}{||y||^m M_m \cdot K(\theta,y)dy} \\
\text{(by the growth estimate (\ref{eq:PKest}))} \quad &\leq \int_{\Gamma}{||y||^m M_m \cdot C \exp(-\sqrt{\lambda}|y_n|) dy} \\
&\leq C M_m 2^m \int_{\Gamma}{(A^m+|y_n|^m) \cdot \exp(-\sqrt{\lambda}|y_n|) dy},
\end{align*}
where $A$ denotes the maximum semi-axis of the base of the cylinder.  Let $C_e$ be the $(n-2)$-dimensional 
surface area of the ellipsoidal base of the cylinder.  
Then this last integral can be calculated exactly:
$$ 2 C C_e M_m 2^m \int_{0}^{\infty}{(A^m + y_n^m) \exp(-\sqrt{\lambda} \cdot y_n)dy_n}= \frac{2 C C_e}{\sqrt{\lambda}} M_m 2^m (A^m + m!/\lambda^{m/2}).$$

Write $\hat{C}=\frac{4 C C_e}{\sqrt{\lambda}}$ and $B= 2 \cdot \max \{ A,1/\sqrt{\lambda} \}$.  Then we have
$$ \max_{\theta \in S^{n-1}} \left|u_m(\theta)\right| \leq \hat{C} M_m B^m m! .$$

Applying Lemma \ref{homog} while defining $C':= \hat{C} \cdot C_n$, we have:
\begin{equation}\label{eq:basic}
\max_{\theta \in S^{n-1}} \left|u_{m,j}(\theta)\right| \leq  C_n j^{n/2} \max_{\theta \in S^{n-1}} \left|u_m(\theta)\right| = C' j^{n/2} M_m B^m m! .
\end{equation}
Next we show $u$ is entire by establishing the condition (\ref{eq:obj}).
Starting with a simple estimate followed by term-wise application of the above, we obtain:
\begin{align*}
\max_{\theta \in S^{n-1}} \left| \sum_{m=j}^{\infty}{u_{m,j}(\theta)} \right| &\leq  \sum_{m=j}^{\infty} \max_{\theta \in S^{n-1}}  \left|{u_{m,j}(\theta)} \right| \\
\text{(by estimate (\ref{eq:basic}))}  \quad  &\leq C' j^{n/2} \sum_{m=j}^{\infty}{ M_m B^m m!} \\
&\leq C'' j^{n/2} \sum_{m=j}^{\infty}{ \frac{m^{m+1}}{m^{m(1+\delta)}} (B/e)^m},
\end{align*}
where in the last line we have used the elementary estimate, $m! \leq  e m^{m+1} e^{-m}$,
along with the estimate $M_m = O(\frac{1}{m^{m(1+\delta)}})$.
Let us split $\delta = \delta_1 + \delta_2$ so that $\delta_1, \delta_2 >0$ in order to rewrite the last sum above as:
$$ C'' j^{n/2} \sum_{m=j}^{\infty}{ \frac{m^{m}}{m^{m(1+\delta_1)}} \frac{m \cdot (B/e)^m}{m^{m \delta_2}}}. $$

We can bound $\frac{m \cdot (B/e)^m}{m^{m \delta_2}}$ by a constant.  Thus, this sum is at most
$$ C''' j^{n/2} \sum_{m=j}^{\infty}{ \left(\frac{1}{m^{\delta_1}}\right)^m } \leq C''' j^{n/2} \sum_{m=j}^{\infty}{ \left(\frac{1}{j^{\delta_1}}\right)^m } = C''' j^{n/2} \left(\frac{1}{j^{\delta_1}}\right)^j \frac{1}{1-\frac{1}{j^{\delta_1}} }.$$
Taking the $jth$ root, we have $ \left( C''' j^{n/2} \frac{1}{1-\frac{1}{j^{\delta_1}} } \right)^{1/j}  \frac{1}{j^{\delta_1}}$ which converges to zero as $j \rightarrow \infty$.
Thus, condition (\ref{eq:obj}) is satisfied.  This concludes the proof of Theorem \ref{ordleq1}.

\section{Concluding remarks}

We have shown that, with entire data of order $\rho(f) < 1$, 
the Poisson integral solution extends as an entire function.
Let us reiterate (see the above remark after Theorem \ref{ordleq1}) that this solution is not unique;
in fact, the space of harmonic functions vanishing on the boundary is infinite dimensional.
This leads to the question of whether every solution is entire.
By subtracting the solution that has already been shown to be entire,
this reduces the problem to considering zero boundary data.
Thus, if a function harmonic in $\Omega$ vanishes on the boundary $\Gamma$,
does it extend as an entire harmonic function?
This question has been asked by the first author at conferences.
For $n=2$ (the ``cylinder'' becomes a strip) the answer is ``yes'' as follows from an argument based on the reflection principle.
The question was resolved positively for circular cylinders by S. Gardiner and the third author \cite{GardRend}. 
The case of elliptic cylinders remains open; we conjecture the same outcome in that case.

It seems likely that the same method in the above proof of Theorem \ref{ordleq1}
can be applied also to entire data $f$ 
satisfying a restriction on growth only in the $x_n$-direction such as having, for every $\varepsilon>0$ and $x'=(x_1,x_2,..,x_{n-1})$,
a constant $c(x')$ such that
$$ |f(x)| \leq c(x') \cdot \exp( {\sqrt{\lambda} |x_n|} ) .$$

Miyamoto proved in \cite[Thm. 2]{Miya96} that for any continuous function 
on a cylinder the Dirichlet problem has a (non-unique) classical solution.
It is then natural to ask: for arbitrary entire data, is every solution entire?
The $n=2$ case of a strip again provides some evidence in this direction,
as well as spherical cylinders in $\RR^4$, using the techniques from \cite{Khav91} to reduce the problem to the planar case.
This leads us to conjecture an affirmative answer to this question as well.

In a forthcoming paper, we will address these questions for domains in $\RR^n$ bounded by a pair of parallel hyperplanes.

\bibliographystyle{amsplain}

\end{document}